\newtheorem{theorem}{Theorem}[section]
\newtheorem{assumption}[theorem]{Assumption}
\newtheorem{definition}[theorem]{Definition}
\newtheorem{lemma}[theorem]{Lemma}
\newtheorem{remark}[theorem]{Remark}
\numberwithin{equation}{section}
\def\beq{\begin{equation}}
\def\ee{\end{equation}}
\def\CC{\mathbb{C}}
\def\RR{\mathbb{R}}
\def\ZZ{\mathbb{Z}}
\def\unity{\mathbbm{1}}
\def\del{\partial}
\def\trace{\text{tr}}
\begin{document}
\title{Darboux coordinates for periodic solutions of the sinh-Gordon equation}

\date{\today}

\author{Markus Knopf}

\address{Institut f\"ur Mathematik, Universit\"at Mannheim, 
68131 Mannheim, Germany.} 

\email{knopf@math.uni-mannheim.de}

\begin{abstract}
\footnotesize
We study the space of periodic solutions of the elliptic $\sinh$-Gordon equation by means of spectral data consisting of a Riemann surface $Y$ and a divisor $D$ and prove the existence of certain Darboux coordinates.
\end{abstract}

\maketitle


\section{Introduction}

The elliptic \textit{sinh-Gordon equation} is given by
\beq
\Delta u + 2\sinh(2u) = 0,
\label{eq_sinh}
\ee
where $\Delta$ is the Laplacian of $\RR^2$ with respect to the Euclidean metric and $u:\RR^2 \to \CC$ is a twice partially differentiable complex-valued function. \\

In the present setting we only demand that $u$ is periodic with one fixed period. After rotating the domain of definition we can assume that this period is real. This enables us to introduce the space $M^{\mathbf{p}}$ of simply periodic \textit{Cauchy data} with fixed period $\mathbf{p} \in \RR$ consisting of pairs $(u,u_y) \in W^{1,2}(\RR/\mathbf{p}\ZZ) \times L^2(\RR/\mathbf{p}\ZZ)$. \\

In \cite{Knopf_phd} the map $\Phi: (u,u_y) \mapsto (Y(u,u_y), D(u,u_y))$ was studied for finite type solutions $u$ of the $\sinh$-Gordon equation. $\Phi$ assigns spectral data consisting of a Riemann surface $Y(u,u_y)$ and a divisor $D(u,u_y)$ to the Cauchy data $(u,u_y)$ of such solutions. \\

We will restrict to the map $(u,u_y) \mapsto D(u,u_y)$ that assigns to Cauchy data $(u,u_y) \in M^{\mathbf{p}}$ a divisor $D(u,u_y) = \sum_i (\lambda_i,\mu_i)$ on the spectral curve $Y(u,u_y)$ to potentials $(u,u_y)$ where $D$ has only simple points, i.e. $D$ contains no points of higher order. Moreover, we will consider $\lambda_i,\, \mu_i$ as maps $\lambda_i,\, \mu_i: M^{\mathbf{p}} \to \CC$. \\

The main goal of this paper is to prove the existence of certain Darboux coordinates for $M^{\mathbf{p}}$ and to adapt Theorem 2.8 in \cite{Poeschel_Trubowitz}. More precisely, we show that $(\ln\lambda_i,\ln\mu_i)$ are indeed Darboux coordinates with respect to the symplectic form $\Omega: T_{(u,u_y)}M^{\mathbf{p}} \to \CC, \; ((\delta u, \delta u_y),(\widetilde{\delta}u,\widetilde{\delta}u_y)) \mapsto \Omega((\delta u, \delta u_y),(\widetilde{\delta}u,\widetilde{\delta}u_y))$ on the tangent space $T_{(u,u_y)}M^{\mathbf{p}}$, that was introduced in \cite{McKean}, \cite{Knopf_phd}. \\

P\"oschel and Trubowitz describe Theorem 2.8 in \cite{Poeschel_Trubowitz} as one of the main ingredients for the investigation of the KdV equation by means of spectral theory. Since we are able to adapt this theorem to our situation, we expect that broad parts of \cite{Poeschel_Trubowitz} can be carried over for the $\sinh$-Gordon equation, as well.

\section{Cauchy data and the monodromy} 
Let us consider the system
$$
\tfrac{\del}{\del x}F_{\lambda} = F_{\lambda}U_{\lambda}, \;\; \tfrac{\del}{\del y} F_{\lambda} = F_{\lambda} V_{\lambda} \;\; \text{ with } \;\;  F_{\lambda}(0) = \unity
$$
and
\beq
U_{\lambda} = \frac{1}{2}
\begin{pmatrix}
 -i u_y & i\lambda^{-1}e^u + ie^{-u} \\
i\lambda e^u + ie^{-u} & iu_y
\end{pmatrix}, \;\;\;
V_{\lambda} = \frac{1}{2} \begin{pmatrix} i u_x & -\lambda^{-1}e^u + e^{-u} \\ \lambda e^u - e^{-u} & -i u_x \end{pmatrix}.
\label{eq_U_lambda}
\ee
The compatibility condition for this system
$$
\tfrac{\del}{\del y} U_{\lambda} - \tfrac{\del}{\del x}V_{\lambda} - [U_{\lambda}, V_{\lambda}]= 0
$$
holds if and only if the function $u:\CC \to \CC$ satisfies the \textbf{sinh-Gordon equation}
\beq
\Delta u + 2\sinh(2u) = 0.
\label{eq_sinh}
\ee
$F_{\lambda}$ is called \textbf{extended frame} for the pair $(U_{\lambda}, \, V_{\lambda})$. 

\subsection{Cauchy data $(u,u_y)$}

We consider simply periodic solutions of \eqref{eq_sinh} with a fixed period $\mathbf{p} \in \CC$. After rotating the domain of definition we can assume that this period is real, i.e. $\Im(\mathbf{p}) = 0$. From now on we therefore consider simply periodic \textit{Cauchy data} with fixed period $\mathbf{p} \in \RR$ consisting of a pair $(u,u_y) \in W^{1,2}(\RR/\mathbf{p}\ZZ) \times L^2(\RR/\mathbf{p}\ZZ)$.

\begin{remark}\label{isomorphism_U_lambda}
Due to \eqref{eq_U_lambda} the matrix $U_{\lambda}$ uniquely determines the tuple $(u,u_y)$. Vice versa, the tuple $(u,u_y)$ determines $U_{\lambda}$ and $V_{\lambda}$.
\end{remark}

\subsection{The monodromy}

The central object for the following considerations is contained in

\begin{definition}
Let $F_{\lambda}$ be an extended frame for $U_{\lambda}$ and assume that $U_{\lambda} = F_{\lambda}^{-1}\frac{d}{dx}F_{\lambda}$ has period $\mathbf{p}$, i.e. $U_{\lambda}(x+\mathbf{p}) = U_{\lambda}(x)$. Then the \textbf{monodromy} of the frame $F_{\lambda}$ with respect to the period $\mathbf{p}$ is given by
$$
M_{\lambda}^{\mathbf{p}} := F_{\lambda}(x+\mathbf{p})F_{\lambda}^{-1}(x).
$$
\end{definition}

Since $F_{\lambda}(0) = \unity$, we get 
$$
M_{\lambda} := M_{\lambda}^{\mathbf{p}} = F_{\lambda}(\mathbf{p})F_{\lambda}^{-1}(0) = F_{\lambda}(\mathbf{p}).
$$

\section{Spectral curve $Y$ and divisor $D$}

The eigenvalues and eigenlines of the monodromy $M_{\lambda}$ are encoded in the so-called \textit{spectral data} $(Y(u,u_y), D(u,u_y))$. We omit the dependency on $(u,u_y)$ in the following. Let us first define the \textbf{spectral curve} $Y$ by the eigenvalues of $M_{\lambda}$:
$$
Y := \{(\lambda, \mu) \in \CC^* \times \CC \left|\right. \det(M_{\lambda} - \mu\unity) = 0\}
$$
$Y$ is a non-compact hyperelliptic Riemann surface with possible singularities and is equipped with the so-called hyperelliptic involution $\sigma: Y \to Y, \, (\lambda,\mu) \mapsto (\lambda, 1/\mu)$. \\

The eigenlines of the monodromy $M_{\lambda}$ are described by normalized eigenvectors.

\begin{lemma}\label{lemma_eigenbundle}
On the spectral curve $Y$ there exist unique meromorphic maps $v(\lambda,\mu)$ and $w(\lambda,\mu)$ from $Y$ to $\CC^2$ such that
\begin{enumerate}
\item[(i)] For all $(\lambda,\mu) \in Y$ the value of $v(\lambda,\mu)$ is an eigenvector of $M_{\lambda}$ with eigenvalue $\mu$ and  $w(\lambda,\mu)$ is an eigenvector of $M_{\lambda}^t$ with eigenvalue $\mu$, i.e. 
$$
M_{\lambda}v(\lambda,\mu) = \mu v(\lambda,\mu), \;\;\; M^t_{\lambda}w(\lambda,\mu) = \mu w(\lambda,\mu).
$$
\item[(ii)] The first components of $v(\lambda,\mu)$ and $w(\lambda,\mu)$ are equal to $1$, i.e. $v(\lambda,\mu) = (1, v_2(\lambda,\mu))^t$ and $w(\lambda,\mu) = (1, w_2(\lambda,\mu))^t$ on $Y$.
\end{enumerate}
\end{lemma}

Set $M_{\lambda} = \left(\begin{smallmatrix} a(\lambda) & b(\lambda) \\ c(\lambda) & d(\lambda) \end{smallmatrix}\right)$ with holomorphic functions $a, b, c, d : \CC^* \to \CC$. Then we get
$$
M_{\lambda} v(\lambda,\mu) = \mu v(\lambda,\mu) \;\; \text{ for } \;\; v(\lambda,\mu) = \begin{pmatrix} 1 \\ \frac{\mu-a}{b} \end{pmatrix} = \begin{pmatrix} 1 \\ \frac{c}{\mu - d} \end{pmatrix},
$$
where the last equations holds due to the equation $\det(M_{\lambda} - \mu\unity) = 0$ defining $Y$. Then the pole divisor of the function $ f:= \frac{\mu-a}{b} = \frac{c}{\mu - d}$ gives rise to the \textbf{divisor}
$$
D := \{(\lambda, \mu) \in \CC^* \times \CC \left|\right. b(\lambda) = 0 \text{ and } d(\lambda) = \mu \} \subset Y.
$$
In the finite type situation it is known that the spectral data $(Y,D)$ determine the monodromy $M_{\lambda}$ and also the corresponding Cauchy data $(u,u_y)$ (cf. \cite{Hitchin_Harmonic}). Due to the work of \cite{Klein} this is also the case for general Cauchy data $(u,u_y)$, possibly of infinite type. In fact, the divisor $D$ alone uniquely determines $(u,u_y)$ \cite{Klein}. We will need the following assumption.

\begin{assumption}\label{assumption_divisor}
All points $(\lambda_i,\mu_i)$ from $D$ are simple points, i.e. $D$ contains no points of higher order.
\end{assumption}

By definition of $D$, this assumption is equivalent to $b(\lambda)$ having pairwise distinct roots. \\

Dropping this assumption, one would have to work with elementary symmetric functions in order to obtain similar results as in our present setting.

\section{Hamiltonian formalism}

Simply periodic Cauchy data $(u,u_y) \in W^{1,2}(\RR/\mathbf{p}\ZZ) \times L^2(\RR/\mathbf{p}\ZZ)$ can be considered as a \textbf{symplectic manifold} $M^{\mathbf{p}}$ with a symplectic form $\Omega$ (following the exposition of \cite{McKean}). For $(u,u_y) \in M^{\mathbf{p}}$ and $(\delta u, \delta u_y),(\widetilde{\delta} u,\widetilde{\delta} u_y) \in T_{(u,u_y)}M^{\mathbf{p}}$ the symplectic form is given by
$$
\Omega((\delta u, \delta u_y),(\widetilde{\delta} u,\widetilde{\delta} u_y)) = \int_0^{\mathbf{p}} \left(\delta u(x)\widetilde{\delta} u_y(x) - \widetilde{\delta} u(x)\delta u_y(x) \right)\,dx
$$
and the Poisson bracket reads
$$
\{f,g\} = \int_0^{\mathbf{p}} \langle\nabla f, J \nabla g\rangle \,dx \; \text{ with } \; J=\begin{pmatrix} 0 & 1 \\ -1 & 0 \end{pmatrix}.
$$
Here $f$ and $g$ are functionals of the form $h: M^{\mathbf{p}} \to \CC,\; (u,u_y)\mapsto h(u,u_y)$ and $\nabla h$ denotes the corresponding gradient of $h$ in the function space $W^{1,2}(\RR/\mathbf{p}\ZZ) \times L^2(\RR/\mathbf{p}\ZZ)$. Note, that $\{f,g\} = \Omega(\nabla f, \nabla g)$ and
$$
\frac{d}{dt}h((u,u_y) + t(\delta u,\delta u_y))\bigg|_{t=0} = \Omega(\nabla h(u,u_y), (\delta u, \delta u_y))
$$
holds.

\subsection{A variational formula}

Considering $M_{\lambda}: M^{\mathbf{p}} \to SL(2,\CC)$ as a map on the symplectic manifold $M^{\mathbf{p}}$, we get the following lemma.

\begin{lemma}\label{lemma_variation_ln_mu}
For the map $(u,u_y) \mapsto M_{\lambda}$ we have the variational formula
$$
\delta M_{\lambda} = \left( \begin{smallmatrix} \delta a & \delta b \\ \delta c & \delta d \end{smallmatrix}\right) = \left(\int_0^{\mathbf{p}} F_{\lambda}(x) \delta U_{\lambda}(x) F_{\lambda}^{-1}(x) \,dx\right) M_{\lambda}
$$
with
$$
\delta U_{\lambda} = \frac{1}{2}
\begin{pmatrix}
-i\delta u_y & i\lambda^{-1} e^u \delta u -i e^{-u}\delta u \\
i\lambda  e^u\delta u -i e^{-u}\delta u & i\delta u_y
\end{pmatrix}.
$$
\end{lemma}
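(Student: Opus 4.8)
The plan is to derive the variational formula from the defining ODE for the extended frame, using the standard "variation of parameters" trick for matrix differential equations. Recall that $M_\lambda = F_\lambda(\mathbf{p})$ where $F_\lambda$ solves $\frac{\partial}{\partial x}F_\lambda = F_\lambda U_\lambda$ with $F_\lambda(0) = \unity$. First I would compute $\delta U_\lambda$ directly: by Remark \ref{isomorphism_U_lambda} the matrix $U_\lambda$ depends on $(u,u_y)$ through the entries in \eqref{eq_U_lambda}, so differentiating each entry with respect to the variation $(\delta u, \delta u_y)$ and using $\delta(e^{\pm u}) = \pm e^{\pm u}\delta u$ yields the claimed expression for $\delta U_\lambda$. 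This is a routine calculation and I expect no difficulty here.

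The heart of the argument is to obtain $\delta F_\lambda(\mathbf{p})$ in terms of $\delta U_\lambda$. The key step is to write down the ODE satisfied by the variation $\delta F_\lambda$. Differentiating the equation $\frac{\partial}{\partial x}F_\lambda = F_\lambda U_\lambda$ with respect to the variation parameter gives
$$
\tfrac{\partial}{\partial x}(\delta F_\lambda) = (\delta F_\lambda)U_\lambda + F_\lambda(\delta U_\lambda),
$$
which is an inhomogeneous linear ODE for $\delta F_\lambda$ whose homogeneous part is governed by the same $U_\lambda$. Since $F_\lambda(0) = \unity$ is independent of $(u,u_y)$, the initial condition is $\delta F_\lambda(0) = 0$. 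The plan is to solve this by variation of parameters: I would verify that the ansatz
$$
\delta F_\lambda(x) = F_\lambda(x)\int_0^x F_\lambda^{-1}(s)\,F_\lambda(s)\,\delta U_\lambda(s)\,F_\lambda^{-1}(s)\,F_\lambda(s)\,ds
$$
— more cleanly, $\delta F_\lambda(x) = \left(\int_0^x F_\lambda(s)\,\delta U_\lambda(s)\,F_\lambda^{-1}(s)\,ds\right)F_\lambda(x)$ — satisfies both the ODE and the initial condition. Differentiating this ansatz and applying the product rule, together with $\frac{\partial}{\partial x}F_\lambda^{-1} = -U_\lambda F_\lambda^{-1}$, the integral-boundary term reproduces $F_\lambda\,\delta U_\lambda$ while the term where the derivative hits the trailing $F_\lambda$ reproduces $(\delta F_\lambda)U_\lambda$, confirming the formula.

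Evaluating the verified ansatz at $x = \mathbf{p}$ and recalling $M_\lambda = F_\lambda(\mathbf{p})$ gives
$$
\delta M_\lambda = \delta F_\lambda(\mathbf{p}) = \left(\int_0^{\mathbf{p}} F_\lambda(x)\,\delta U_\lambda(x)\,F_\lambda^{-1}(x)\,dx\right)M_\lambda,
$$
which is exactly the claimed identity, with the entrywise decomposition $\delta M_\lambda = \left(\begin{smallmatrix}\delta a & \delta b\\ \delta c & \delta d\end{smallmatrix}\right)$ following by matching components. The main obstacle — really the only place requiring care — is getting the variation-of-parameters ansatz in the correct left/right multiplicative order, since $F_\lambda$ multiplies $U_\lambda$ from the left in the defining ODE (rather than the more familiar right-multiplication convention); I would double-check the placement of $F_\lambda$ versus $F_\lambda^{-1}$ inside and outside the integral by differentiating and matching against the inhomogeneous ODE term by term. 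Once the ordering is fixed, the initial condition $\delta F_\lambda(0)=0$ is immediate because the integral over $[0,0]$ vanishes, and the proof concludes.
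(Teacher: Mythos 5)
Your proposal is correct and follows essentially the same route as the paper: linearize the frame equation $\tfrac{\partial}{\partial x}F_\lambda = F_\lambda U_\lambda$ to get the inhomogeneous variational ODE with $\delta F_\lambda(0)=0$, solve it by the variation-of-parameters formula $\delta F_\lambda(x) = \left(\int_0^x F_\lambda\,\delta U_\lambda\,F_\lambda^{-1}\,ds\right)F_\lambda(x)$, and evaluate at $x=\mathbf{p}$. The only cosmetic difference is that the paper simply states this solution formula while you verify it by differentiating the ansatz, which is a harmless (indeed slightly more careful) elaboration of the same step.
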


\begin{proof}
We follow the ansatz presented in \cite{McKean}, Section $6$, and obtain for $F_{\lambda}(x)$ solving $\frac{d}{dx}F_{\lambda} = F_{\lambda}U_{\lambda}$ with $F_{\lambda}(0) = \unity$ the variational equation
$$
\frac{d}{dx}\frac{d}{dt}F_{\lambda}(\delta u,\delta u_y)\bigg|_{t=0} = \left(\frac{d}{dt}F_{\lambda}(\delta u,\delta u_y)\bigg|_{t=0}\right) U_{\lambda} + F_{\lambda}\delta U_{\lambda} 
$$
with 
$$
\left(\frac{d}{dt}F_{\lambda}(\delta u,\delta u_y)\bigg|_{t=0}\right)(0) = \begin{pmatrix} 0 & 0 \\ 0 & 0 \end{pmatrix}
$$
and
$$
\delta U_{\lambda} = \frac{1}{2}
\begin{pmatrix}
-i\delta u_y & i\lambda^{-1} e^u \delta u -i e^{-u}\delta u \\
i\lambda  e^u\delta u -i e^{-u}\delta u & i\delta u_y
\end{pmatrix}.
$$
The solution of this differential equation is given by
$$
\left(\frac{d}{dt}F_{\lambda}(\delta u,\delta u_y)\bigg|_{t=0}\right)(x) = \left(\int_0^x F_{\lambda}(y) \delta U_{\lambda}(y) F_{\lambda}^{-1}(y) \,dy\right) F_{\lambda}(x)
$$
and evaluating at $x = \mathbf{p}$ yields
$$
\frac{d}{dt}M_{\lambda}(\delta u,\delta u_y)|_{t=0} = \left(\int_0^{\mathbf{p}} F_{\lambda}(y) \delta U_{\lambda}(y) F_{\lambda}^{-1}(y) \,dx\right) M_{\lambda}.
$$
This proves the claim.
\end{proof}


\section{Darboux coordinates}

Let us prove the existence of certain Darboux coordinates. First, recall the following theorem of Darboux.

\begin{theorem}[Darboux]
Locally a symplectic manifold $(M,\Omega)$ of dimension $2n$ is symplectomorphic to an open subset of $(\RR^{2n},\Omega_0)$, where the symplectic form $\Omega_0$ is given by
$$
\Omega_0 = \sum_{i=1}^n d\theta_i \wedge dI_i.
$$
\end{theorem}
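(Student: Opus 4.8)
The plan is to prove this by Moser's deformation method (the ``path trick''), working locally near an arbitrary point $p \in M$.

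\textbf{Step 1: linear normal form at $p$.} Since $\Omega_p$ is a nondegenerate skew-symmetric bilinear form on the $2n$-dimensional vector space $T_p M$, the standard symplectic linear algebra (a Gram--Schmidt-type procedure for skew forms) produces a basis of $T_p M$ in which $\Omega_p$ takes the form $\sum_{i=1}^n d\theta_i \wedge dI_i$. Choosing a chart $(\theta_1,\dots,\theta_n,I_1,\dots,I_n)$ centered at $p$ adapted to this basis, I identify a neighborhood of $p$ with an open set $U \subset \RR^{2n}$ on which both the transported form $\Omega$ (with variable coefficients) and the constant-coefficient form $\Omega_0 := \sum_i d\theta_i \wedge dI_i$ are defined, and which agree at the single point $p$.

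\textbf{Step 2: interpolate and set up the homological equation.} Define $\Omega_t := (1-t)\Omega_0 + t\,\Omega$ for $t \in [0,1]$. Because $\Omega_t|_p = \Omega_p = \Omega_0|_p$ for every $t$, each $\Omega_t$ is nondegenerate at $p$, hence nondegenerate on a possibly smaller neighborhood $U$ uniformly in $t$; moreover each $\Omega_t$ is closed. I seek an isotopy $\psi_t : U \to M$ with $\psi_0 = \mathrm{id}$ and $\psi_t^*\Omega_t = \Omega_0$, so that $\psi_1^*\Omega = \Omega_0$ and $\psi_1$ (composed with the chart of Step 1) furnishes the claimed symplectomorphism onto an open subset of $(\RR^{2n},\Omega_0)$. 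Writing $\psi_t$ as the flow of a time-dependent vector field $X_t$ and differentiating the target identity, Cartan's magic formula together with $d\Omega_t = 0$ turns $\frac{d}{dt}(\psi_t^*\Omega_t) = 0$ into
$$
\psi_t^*\!\left( d(i_{X_t}\Omega_t) + \dot\Omega_t \right) = 0 .
$$
Here $\dot\Omega_t = \Omega - \Omega_0$ is closed and, on the contractible chart $U$, exact; the Poincaré lemma (via the radial homotopy operator based at $p$) yields a $1$-form $\alpha$ with $d\alpha = \Omega - \Omega_0$ and $\alpha_p = 0$. It then suffices to solve the pointwise-algebraic equation $i_{X_t}\Omega_t = -\alpha$, which admits a unique smooth solution $X_t$ precisely because $\Omega_t$ is nondegenerate on $U$.

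\textbf{Step 3: integrate.} Integrating $X_t$ over $t \in [0,1]$ produces the isotopy $\psi_t$, and $\psi_1$ satisfies $\psi_1^*\Omega = \Omega_0$. The main point requiring care is not conceptual but a matter of ensuring the flow of the time-dependent field $X_t$ exists for all $t \in [0,1]$ on a fixed neighborhood of $p$. This is exactly where the normalization $\alpha_p = 0$ pays off: it forces $X_t(p) = 0$, so $p$ is a stationary point of each $X_t$, and by shrinking $U$ the trajectories starting near $p$ remain in $U$ over the compact time interval $[0,1]$. Combined with the uniform nondegeneracy of $\Omega_t$ near $p$, this guarantees $\psi_1$ is a well-defined diffeomorphism with $\psi_1^*\Omega = \Omega_0$, completing the proof.
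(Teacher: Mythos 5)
Your argument is correct, but note that the paper does not prove this statement at all: Darboux's theorem is quoted there as classical background, serving only to motivate the terminology ``Darboux coordinates'' before the paper's actual results (Theorem \ref{theorem_PT} and Theorem \ref{Theorem_Darboux}), which are infinite-dimensional analogues established by entirely different, explicitly computational means (evaluating $\Omega$ on the gradients $a_i,b_i$ built from eigenvector solutions). So there is no in-paper proof to compare against; what you have written is the standard Moser deformation-trick proof, and it is sound and complete. You correctly isolate the two points where care is genuinely needed: uniform-in-$t$ nondegeneracy of $\Omega_t=(1-t)\Omega_0+t\,\Omega$ near $p$, which holds because all the $\Omega_t$ coincide at $p$ (a convex combination of nondegenerate forms is not nondegenerate in general) together with compactness of $[0,1]$; and the vanishing $X_t(p)=0$, which keeps trajectories inside a fixed neighborhood for all $t\in[0,1]$. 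Two cosmetic remarks: the radial homotopy operator gives $\alpha_p=0$ automatically, since it contracts with the radial vector field, which vanishes at the center, so no separate normalization is required; and since the statement asks for a symplectomorphism onto an open subset of $(\RR^{2n},\Omega_0)$, the final map is strictly speaking the inverse of your isotopy composed with the chart, i.e. $\psi_1^{-1}\circ\phi$, which pulls $\Omega_0$ back to $\Omega$. For context, the other classical route is induction on $n$, constructing conjugate coordinate functions via flows of Hamiltonian vector fields; it is more elementary, while the Moser method you chose generalizes immediately to relative and stability versions.
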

That is, given a point $p \in M$, there is a neighborhood $V$ of $p$ in $M$ and a diffeomorphism $\Phi: U \subset \RR^{2n} \to V$ of an open subset $U$ in $\RR^{2n}$ onto $V$ such that
$$
\Phi^*\Omega = \Omega_0
$$
holds. The coordinates provided by $\Phi$ are called \textit{Darboux coordinates}. Let us recall a result in \cite{Schmidt_infinite}, where the non-linear Schr\"odinger operator with periodic potential $q(x)$ was investigated. It was shown that the points $(\lambda_i,\mu_i)_{i\in\mathcal{I}}$ of the corresponding divisor $D(q)$ are almost Darboux coordinates in the sense that
$$
\Omega(\delta q, \delta \widetilde{q}) = \sum_i (\tfrac{d}{dt}\lambda_i(\delta q)|_{t=0}) (\tfrac{d}{dt}\ln\mu_i(\delta \widetilde{q})|_{t=0}) - (\tfrac{d}{dt}\lambda_i(\delta \widetilde{q})|_{t=0}) (\tfrac{d}{dt}\ln\mu_i(\delta q)|_{t=0}),
$$
or in short form
$$
\Omega = \sum_i d\lambda_i \wedge d\ln\mu_i.
$$

An analogous result was proven in \cite{Adams_Harnad_Hurtubise} for finite dimensional integrable systems. In case of the KdV equation, Theorem 2.8 in \cite{Poeschel_Trubowitz} shows that the gradients of the points from the Dirichlet divisor are a symplectic basis on the tangent space by explicitly evaluating the symplectic form on these gradients. This shows that the points from the Dirichlet divisor are indeed Darboux coordinates. We will now adapt this idea to the present setting.

\subsection{Basic identities}
Let $\varphi$ and $\psi$ be solutions of the differential equations
\begin{footnotesize}
\beq
\frac{d}{dx} \begin{pmatrix} \varphi_1 \\ \varphi_2 \end{pmatrix} = 
-\frac{1}{2} \begin{pmatrix} -i u_y & i\lambda^{-1} e^u + i e^{-u}\\ i\lambda e^u +i e^{-u} & i u_y \end{pmatrix} \begin{pmatrix} \varphi_1 \\ \varphi_2 \end{pmatrix}, \;\;
\frac{d}{dy} \begin{pmatrix} \varphi_1 \\ \varphi_2 \end{pmatrix} = 
-\frac{1}{2} \begin{pmatrix} i u_x & -\lambda^{-1}e^u + e^{-u} \\ \lambda e^u - e^{-u} & -i u_x \end{pmatrix} \begin{pmatrix} \varphi_1 \\ \varphi_2 \end{pmatrix}
\label{eq_ode_phi}
\ee
\end{footnotesize}
and
\begin{footnotesize}
\beq
\frac{d}{dx} \begin{pmatrix} \psi_1 \\ \psi_2 \end{pmatrix} = 
\frac{1}{2} \begin{pmatrix} -i u_y & i\lambda e^u +i e^{-u} \\ i\lambda^{-1} e^u + i e^{-u} & i u_y \end{pmatrix} \begin{pmatrix} \psi_1 \\ \psi_2 \end{pmatrix}, \;\;
\frac{d}{dy} \begin{pmatrix} \psi_1 \\ \psi_2 \end{pmatrix} = 
\frac{1}{2} \begin{pmatrix} i u_x & \lambda e^u - e^{-u} \\ -\lambda^{-1} e^u + e^{-u} & -i u_x \end{pmatrix} \begin{pmatrix} \psi_1 \\ \psi_2 \end{pmatrix}.
\label{eq_ode_psi}
\ee
\end{footnotesize}
For now, we omit the initial values for $\varphi$ and $\psi$; they will play an important role later on. Considering the above differential equations, straightforward calculations show the following identities.
\begin{lemma}\label{basic_eqs}
Let $\omega := \psi_1\varphi_1 - \psi_2\varphi_2$. The following equations hold:
\begin{enumerate}
\item $\frac{d}{dx}(\varphi_1 \varphi_2) = -\frac{i}{2} ((\lambda e^u + e^{-u})\varphi_1^2 + (\lambda^{-1}e^u + e^{-u})\varphi_2^2)$
\item $\frac{d}{dy}(\varphi_1 \varphi_2) = \frac{1}{2} ((-\lambda e^u + e^{-u})\varphi_1^2 + (\lambda^{-1}e^u - e^{-u})\varphi_2^2)$
\item $\frac{d}{dx} \omega = (i\lambda e^u + i e^{-u})\varphi_1\psi_2 - (i\lambda^{-1}e^u + i e^{-u}) \psi_1\varphi_2$
\item $\frac{d}{dy} \omega = (\lambda e^u - e^{-u})\varphi_1\psi_2 + (\lambda^{-1}e^u - e^{-u}) \psi_1\varphi_2$
\item $\frac{d}{dx}\varphi_1^2 = i(u_y \varphi_1^2 - (\lambda^{-1}e^u + e^{-u})\varphi_1\varphi_2)$
\item $\frac{d}{dx}\varphi_2^2 = i(-u_y \varphi_2^2 - (\lambda e^u + e^{-u})\varphi_1\varphi_2)$
\item $\frac{d}{dx}(\psi_1\varphi_1) = i(\lambda e^u + e^{-u})\psi_2\varphi_1$
\item $\frac{d}{dx}(\psi_2\varphi_2) = i(\lambda^{-1} e^u + e^{-u})\psi_1\varphi_2$
\item $\frac{d}{dx}(\psi_1\varphi_2) = i(-u_y \psi_1\varphi_2 - \frac{1}{2}(\lambda e^u + e^{-u})\omega$
\item $\frac{d}{dx}(\psi_2\varphi_1) = i(u_y \psi_2\varphi_1 + \frac{1}{2}(\lambda^{-1} e^u + e^{-u})\omega$
\end{enumerate}
\end{lemma}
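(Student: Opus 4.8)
The ten identities in Lemma \ref{basic_eqs} are all of the same nature: each asserts that the $x$- or $y$-derivative of a quadratic expression in the components of $\varphi$ and $\psi$ equals a prescribed quadratic expression. Since $\varphi$ and $\psi$ solve the explicit linear systems \eqref{eq_ode_phi} and \eqref{eq_ode_psi}, the plan is simply to differentiate each quadratic by the product rule, substitute the first-order equations for $\frac{d}{dx}\varphi_j$, $\frac{d}{dx}\psi_j$ (and their $y$-analogues), and collect terms. No integration or structural insight is needed; this is exactly the ``straightforward calculations'' the statement advertises. I would organize the computation by first recording the scalar component equations extracted from the matrix systems, and then treating the identities in an order that lets later ones reuse earlier ones.

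\textbf{Component equations.}
From \eqref{eq_ode_phi}, writing out the $x$-system, I read off
\beq
\tfrac{d}{dx}\varphi_1 = \tfrac{i}{2}u_y\varphi_1 - \tfrac{i}{2}(\lambda^{-1}e^u + e^{-u})\varphi_2, \qquad
\tfrac{d}{dx}\varphi_2 = -\tfrac{i}{2}(\lambda e^u + e^{-u})\varphi_1 - \tfrac{i}{2}u_y\varphi_2,
\label{eq_phi_comp}
\ee
and from \eqref{eq_ode_psi}, the $x$-system gives
\beq
\tfrac{d}{dx}\psi_1 = -\tfrac{i}{2}u_y\psi_1 + \tfrac{i}{2}(\lambda e^u + e^{-u})\psi_2, \qquad
\tfrac{d}{dx}\psi_2 = \tfrac{i}{2}(\lambda^{-1}e^u + e^{-u})\psi_1 + \tfrac{i}{2}u_y\psi_2.
\label{eq_psi_comp}
\ee
The corresponding $y$-equations are obtained identically from the second matrices in \eqref{eq_ode_phi} and \eqref{eq_ode_psi}.

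\textbf{Carrying out the identities.}
Identities (5) and (6) follow at once by writing $\frac{d}{dx}\varphi_1^2 = 2\varphi_1\frac{d}{dx}\varphi_1$ and $\frac{d}{dx}\varphi_2^2 = 2\varphi_2\frac{d}{dx}\varphi_2$ and substituting \eqref{eq_phi_comp}; the cross terms then assemble into the stated right-hand sides, and (1) is the sum $\frac{d}{dx}(\varphi_1\varphi_2) = \varphi_1\frac{d}{dx}\varphi_2 + \varphi_2\frac{d}{dx}\varphi_1$ where the $u_y$-contributions cancel. Identity (2) is the same computation using the $y$-component equations. For (7)--(10) I differentiate the mixed products $\psi_j\varphi_k$ by the product rule and substitute both \eqref{eq_phi_comp} and \eqref{eq_psi_comp}; here the algebra is slightly more delicate because terms must be regrouped using $\omega = \psi_1\varphi_1 - \psi_2\varphi_2$, and in (9)--(10) one sees the $\frac{1}{2}\omega$ factor emerge precisely from combining a $\psi_1\varphi_1$ term with a $\psi_2\varphi_2$ term of opposite sign. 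Identities (3) and (4) then follow from (7)--(10): since $\omega = \psi_1\varphi_1 - \psi_2\varphi_2$, I have $\frac{d}{dx}\omega = \frac{d}{dx}(\psi_1\varphi_1) - \frac{d}{dx}(\psi_2\varphi_2)$, and subtracting (8) from (7) gives the claimed expression directly, with (4) handled analogously from the $y$-equations.

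\textbf{Main obstacle.}
There is no conceptual difficulty; the only real risk is bookkeeping, in particular keeping the factors of $\tfrac{i}{2}$, the signs, and the $\lambda$ versus $\lambda^{-1}$ placements straight — note that $\psi$ solves a system with $\lambda$ and $\lambda^{-1}$ \emph{interchanged} relative to $\varphi$ in the off-diagonal entries, which is exactly what makes the mixed identities (7)--(10) close up. The step I would check most carefully is the derivation of (9) and (10), where the recombination into $\omega$ relies on the $\lambda/\lambda^{-1}$ asymmetry matching up correctly; a sign error there would not cancel elsewhere.
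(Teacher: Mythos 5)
Your overall route is the same as the paper's: the paper offers no proof beyond the remark that ``straightforward calculations'' based on \eqref{eq_ode_phi} and \eqref{eq_ode_psi} yield the identities, and the scalar component equations you extract from those two systems are correct. Carrying out your plan does verify items (1), (2), (5), (6), (9), (10), and also (3) and (4) (either directly or as differences of the mixed-product derivatives).

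However, your blanket claim that the cross terms ``assemble into the stated right-hand sides'' fails for items (7) and (8), and no bookkeeping will repair it, because those two identities are false as printed. The product rule together with your component equations gives
\begin{align*}
\frac{d}{dx}(\psi_1\varphi_1) &= \tfrac{i}{2}(\lambda e^u + e^{-u})\psi_2\varphi_1 - \tfrac{i}{2}(\lambda^{-1}e^u + e^{-u})\psi_1\varphi_2, \\
\frac{d}{dx}(\psi_2\varphi_2) &= \tfrac{i}{2}(\lambda^{-1}e^u + e^{-u})\psi_1\varphi_2 - \tfrac{i}{2}(\lambda e^u + e^{-u})\psi_2\varphi_1,
\end{align*}
not the one-term expressions of the lemma. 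One can see a priori that the printed (7) and (8) cannot both hold: since $\frac{d}{dx}\varphi = -U_\lambda\varphi$ and $\frac{d}{dx}\psi = U_\lambda^t\psi$, one has $\frac{d}{dx}(\psi^t\varphi) = \psi^t U_\lambda\varphi - \psi^t U_\lambda\varphi = 0$, so $\psi_1\varphi_1 + \psi_2\varphi_2$ is conserved, whereas the printed right-hand sides of (7) and (8) sum to $i(\lambda e^u + e^{-u})\psi_2\varphi_1 + i(\lambda^{-1}e^u + e^{-u})\psi_1\varphi_2$, which does not vanish identically. So (7) and (8) are typos in the paper, the correct versions being the antisymmetric expressions displayed above. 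This does not propagate into the rest of your argument: the difference of the two printed identities coincides with the difference of the two corrected ones, so your derivation of (3) as ``(7) minus (8)'' still produces the correct formula --- though logically you should obtain (3) directly, rather than as a consequence of two statements that are individually false. Finally, your risk assessment is inverted: (9) and (10), which you single out as the delicate ones, close up exactly as stated via the $\omega$-regrouping, while (7) and (8) are precisely the items that a faithful execution of your own plan would have flagged.
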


Let us now evaluate the above expressions at different values of $\lambda$. Utilizing the formulas from Lemma \ref{basic_eqs}, we get the following lemma.

\begin{lemma}\label{antiderivatives}
The following differential equations hold for $\lambda_i \neq \lambda_j$:
\begin{eqnarray*}
& \frac{d}{dx} & \left[\tfrac{-2i}{\lambda_i - \lambda_j}\left(\lambda_i \varphi_1^2(\lambda_i)(\varphi_2^2)(\lambda_j) - (\lambda_i + \lambda_j)(\varphi_1\varphi_2)(\lambda_i)(\varphi_1\varphi_2)(\lambda_j) + \lambda_j \varphi_2^2(\lambda_i)(\varphi_1^2)(\lambda_j) \right)\right] \\
& & = (\varphi_1\varphi_2)(\lambda_i)\del_y (\varphi_1\varphi_2)(\lambda_j) - (\varphi_1\varphi_2)(\lambda_i)\del_y (\varphi_1\varphi_2)(\lambda_j), \\
& \frac{d}{dx} & \left[\tfrac{-i}{\lambda_i - \lambda_j}\left(2\lambda_i \varphi_1^2(\lambda_i)(\psi_1\varphi_2)(\lambda_j) - (\lambda_i + \lambda_j)(\varphi_1\varphi_2)(\lambda_i)\omega(\lambda_j) - 2\lambda_j \varphi_2^2(\lambda_i)(\psi_2\varphi_1)(\lambda_j) \right)\right] \\
& & = (\varphi_1\varphi_2)(\lambda_i)\del_y \omega(\lambda_j) - \omega(\lambda_i)\del_y (\varphi_1\varphi_2)(\lambda_j), \\
& \frac{d}{dx} & \left[\tfrac{i}{\lambda_i - \lambda_j}\left( 4\lambda_j (\psi_1\varphi_2)(\lambda_i)(\psi_2\varphi_1)(\lambda_j) + (\lambda_i + \lambda_j)\omega(\lambda_i)\omega(\lambda_j) + 4\lambda_i(\psi_2\varphi_1)(\lambda_i)(\psi_1\varphi_2)(\lambda_j)\right)\right] \\
& & = \omega(\lambda_i)\del_y \omega(\lambda_j) - \omega(\lambda_j)\del_y \omega(\lambda_i).
\end{eqnarray*}
Moreover, there holds
\begin{gather*}
\frac{d}{dx} \left[-i(\varphi_1^2\varphi_2\psi_1)(\lambda_i) -i(\varphi_1\varphi_2^2\psi_2)(\lambda_i)\right] \\ 
= -\frac{1}{2}\left((\varphi_1^2\varphi_2\psi_2 + \varphi_1^3\psi_1)(\lambda_i)(\lambda_i e^u - e^{-u}) + (\varphi_1\varphi_2^2\psi_1 + \varphi_2^3\psi_2)(\lambda_i)(\lambda_i^{-1}e^u - e^{-u}) \right).
\end{gather*}
\end{lemma}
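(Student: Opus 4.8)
The plan is to verify each stated equation by differentiating the bracketed expression on the left with respect to $x$ and checking that it equals the right-hand side, using only the ten identities collected in Lemma \ref{basic_eqs}. Since every building block $\varphi_1^2$, $\varphi_2^2$, $\varphi_1\varphi_2$, $\omega$, $\psi_1\varphi_2$, $\psi_2\varphi_1$ has a known $x$-derivative, and since the spectral parameters $\lambda_i$, $\lambda_j$ enter only as constants, the whole computation is a bookkeeping exercise in the product rule. I would treat the three first-order identities together, as they share the same structure: each left-hand bracket is a symmetric bilinear combination of two ``$(\lambda_i)$-factors'' and two ``$(\lambda_j)$-factors'' weighted by $\lambda_i$, $\lambda_j$, and $\lambda_i+\lambda_j$, and the prefactor $\tfrac{1}{\lambda_i-\lambda_j}$ is exactly what is needed to collapse the $\del_x$-output into a $\del_y$-expression.

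For the first equation I would apply the product rule to the three terms in the bracket, substituting identities (5), (6) for $\del_x\varphi_1^2$ and $\del_x\varphi_2^2$ and identity (1) for $\del_x(\varphi_1\varphi_2)$, at both $\lambda_i$ and $\lambda_j$. After expanding, the terms proportional to $\varphi_1\varphi_2$ at one parameter multiplying $\varphi_1^2$ or $\varphi_2^2$ at the other should combine so that the $\lambda_i+\lambda_j$ cross-term cancels against pieces generated by the $\lambda_i$- and $\lambda_j$-weighted terms, leaving a factor that is precisely $(\lambda_i-\lambda_j)$ times the Wronskian-type combination $(\varphi_1\varphi_2)(\lambda_i)\del_y(\varphi_1\varphi_2)(\lambda_j) - (\varphi_1\varphi_2)(\lambda_j)\del_y(\varphi_1\varphi_2)(\lambda_i)$; here I would invoke identity (2) to recognize the $\del_y(\varphi_1\varphi_2)$ factors. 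The second and third equations proceed identically, now drawing on identities (7)--(10) for the mixed products $\psi_1\varphi_2$, $\psi_2\varphi_1$ and on identities (3), (4) to identify $\del_x\omega$ and $\del_y\omega$ on the right. The final, non-homogeneous identity is a single-parameter computation: I would differentiate $-i(\varphi_1^2\varphi_2\psi_1 + \varphi_1\varphi_2^2\psi_2)(\lambda_i)$ by the product rule, insert identities (5)--(10) evaluated at $\lambda_i$, and collect the resulting terms by the coefficients $\lambda_i e^u - e^{-u}$ and $\lambda_i^{-1}e^u - e^{-u}$.

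The main obstacle will be the algebraic cancellation in the three bilinear identities: keeping track of the roughly dozen terms produced per equation, ensuring the $\lambda_i+\lambda_j$ weighting is distributed correctly so that the spurious diagonal terms vanish and the prefactor $(\lambda_i-\lambda_j)^{-1}$ cancels cleanly against the leftover $(\lambda_i-\lambda_j)$. A useful safeguard is the antisymmetry of each right-hand side under $i\leftrightarrow j$, which the bracketed left-hand sides must also respect; checking this symmetry term by term gives an independent consistency test that catches sign or index errors before the full expansion is assembled. (I note, in passing, that the right-hand side of the first displayed equation as written is a difference of two identical terms and hence vanishes; the intended statement almost certainly carries the antisymmetric combination $(\varphi_1\varphi_2)(\lambda_i)\del_y(\varphi_1\varphi_2)(\lambda_j) - (\varphi_1\varphi_2)(\lambda_j)\del_y(\varphi_1\varphi_2)(\lambda_i)$, matching the pattern of the other two, and I would prove it in that corrected form.)
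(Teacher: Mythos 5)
Your overall strategy --- differentiate each bracketed expression with the product rule and substitute the identities of Lemma \ref{basic_eqs} --- is exactly the proof the paper has in mind (the paper prints no proof at all, only the remark that the lemma follows from Lemma \ref{basic_eqs}), and you correctly noticed that the first right-hand side is printed as a difference of two identical terms. The genuine gap is that you assert the computation will then close with only that one repair; in fact three of the four identities are false as printed, and your proposed repair of the first one is still false. Carrying out your own plan for the first identity with identities (1), (5), (6) of Lemma \ref{basic_eqs}, the $u_y$-terms cancel and one finds that
$$
\frac{d}{dx}\left[\tfrac{-2i}{\lambda_i-\lambda_j}\left(\lambda_i \varphi_1^2(\lambda_i)\varphi_2^2(\lambda_j) - (\lambda_i + \lambda_j)(\varphi_1\varphi_2)(\lambda_i)(\varphi_1\varphi_2)(\lambda_j) + \lambda_j \varphi_2^2(\lambda_i)\varphi_1^2(\lambda_j) \right)\right]
$$
equals \emph{twice} the antisymmetric combination $(\varphi_1\varphi_2)(\lambda_i)\del_y(\varphi_1\varphi_2)(\lambda_j) - (\varphi_1\varphi_2)(\lambda_j)\del_y(\varphi_1\varphi_2)(\lambda_i)$; the correct statement carries the prefactor $\tfrac{-i}{\lambda_i-\lambda_j}$, in line with the second identity. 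Note that your $i \leftrightarrow j$ antisymmetry safeguard cannot detect such an overall factor, so it is not an adequate substitute for tracking the constants.

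Two further corrections are needed before the verification can succeed. In the second identity the right-hand side must be $(\varphi_1\varphi_2)(\lambda_i)\del_y\omega(\lambda_j) - \omega(\lambda_j)\del_y(\varphi_1\varphi_2)(\lambda_i)$: as printed, with $\omega(\lambda_i)\del_y(\varphi_1\varphi_2)(\lambda_j)$, the left-hand side produces monomials of the form $\varphi_1^2(\lambda_i)\,\omega(\lambda_j)$ while the printed right-hand side produces $\omega(\lambda_i)\,\varphi_1^2(\lambda_j)$, so the two sides cannot agree identically; the corrected combination is also the one actually needed as the integrand of $\Omega(a_i,b_j)$ in Theorem \ref{theorem_PT}. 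In the fourth identity both coefficient factors must carry plus signs: differentiating $-i\varphi_1^2(\psi_1\varphi_2)(\lambda_i) - i\varphi_2^2(\psi_2\varphi_1)(\lambda_i)$ via (5), (6), (9), (10) yields $-\tfrac{1}{2}(\lambda_i e^u+e^{-u})(\varphi_1^3\psi_1+\varphi_1^2\varphi_2\psi_2)(\lambda_i)-\tfrac{1}{2}(\lambda_i^{-1}e^u+e^{-u})(\varphi_1\varphi_2^2\psi_1+\varphi_2^3\psi_2)(\lambda_i)$, and this plus-sign version (not the printed one) is what matches the integrand of $\Omega(a_i,b_i)-\kappa_i$ in the proof of Theorem \ref{theorem_PT}. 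Finally, a caution on your inputs: identities (7) and (8) of Lemma \ref{basic_eqs}, which you cite for the mixed products, are themselves misprinted --- their sum must vanish because $\psi_1\varphi_1+\psi_2\varphi_2\equiv 1$, yet as printed it does not; each correct version carries a factor $\tfrac{1}{2}$ and a second term. They are fortunately not needed: identities (1)--(6), (9), (10) suffice for all four formulas, so the computation should be organized around those.
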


\subsection{Divisor points}
\label{subsection_divisor_dp}
By definition the monodromy at a divisor point $(\lambda_i,\mu_i)$ is given by
$$
M_{\lambda_i} = \begin{pmatrix} 1/\mu_i & 0 \\ c(\lambda_i) & \mu_i \end{pmatrix}.
$$
We will distinguish between points $(\lambda_i,\mu_i)$ such that $M_{\lambda_i} - \mu_i\unity$ has either a one- or a two-dimensional kernel. These two cases correspond to one- and two-dimensional eigenspaces, respectively. If the kernel at $(\lambda_i,\mu_i)$ is two-dimensional, one has $\mu_i = \pm 1 = \frac{\Delta(\lambda_i)}{2}$ and
$$
M_{\lambda_i} = \begin{pmatrix} \pm 1 & 0 \\ 0 & \pm 1 \end{pmatrix}.
$$
Here $\Delta(\lambda) := \trace(M_{\lambda})$. Consequently, the expression
$$
\det\left(M_{\lambda} - \tfrac{\Delta(\lambda)}{2}\unity\right) = 1-\Delta(\lambda)\tfrac{\Delta(\lambda)}{2}+\left(\tfrac{\Delta(\lambda)}{2}\right)^2 = -\tfrac{1}{4}(\Delta^2(\lambda) - 4)
$$
has at least a double root at $\lambda = \lambda_i$, i.e. $\Delta'(\lambda_i) = 0$. In particular, these points correspond to singularities on the spectral curve $Y$. \\

We can now distinguish the following three cases for eigenvectors $v(\lambda_i,\mu_i)$ of $M_{\lambda_i}$ and eigenvectors $w(\lambda_i,\mu_i)$ of the transposed monodromy $M_{\lambda_i}^t$ at a divisor point $(\lambda_i,\mu_i)$: 
\begin{enumerate}
\item[(i)] If the kernel at $(\lambda_i,\mu_i)$ is one-dimensional:
\begin{enumerate}
\item[(a)] For $\mu_i \neq \pm 1$: Setting $x_i = \frac{c(\lambda_i)}{\mu_i - 1/\mu_i}$ we get
$$
v(\lambda_i,\mu_i) = \begin{pmatrix} 0 \\ 1 \end{pmatrix}, \; v(\lambda_i,1/\mu_i) = \begin{pmatrix} 1 \\ -x_i \end{pmatrix}, \; w(\lambda_i,\mu_i) = \begin{pmatrix} x_i \\ 1 \end{pmatrix}, \; w(\lambda_i,1/\mu_i) = \begin{pmatrix} 1 \\ 0 \end{pmatrix}.
$$
\item[(b)] For $\mu_i = \pm 1$ we have $c(\lambda_i) \neq 0$. A direct calculation gives
$$
v(\lambda_i,\mu_i) = \begin{pmatrix} 0 \\ 1 \end{pmatrix}, \; v_{gen}(\lambda_i,\mu_i) = \begin{pmatrix} 1/c(\lambda_i) \\ 0 \end{pmatrix}, \; w(\lambda_i,\mu_i) = \begin{pmatrix} 1 \\ 0 \end{pmatrix}, \; w_{gen}(\lambda_i,\mu_i) = \begin{pmatrix} 0 \\ 1/c(\lambda_i) \end{pmatrix}.
$$
Here $M_{\lambda_i}$ and $M_{\lambda_i}^t$ take Jordan normal form with respect to the basis $v,\, v_{gen}$ and $w, \, w_{gen}$, respectively.
\end{enumerate}

\item[(ii)]If the kernel at $(\lambda_i,\mu_i)$ is two-dimensional, every vector is an eigenvector of $M_{\lambda_i} = \pm \unity$ and we set
$$
v(\lambda_i,\mu_i) = \begin{pmatrix} 0 \\ 1 \end{pmatrix}, \; v(\lambda_i,1/\mu_i) = \begin{pmatrix} 1 \\ 0 \end{pmatrix}, \; w(\lambda_i,\mu_i) = \begin{pmatrix} 0 \\ 1 \end{pmatrix}, \; w(\lambda_i,1/\mu_i) = \begin{pmatrix} 1 \\ 0 \end{pmatrix}.
$$
\end{enumerate}

Setting $\varphi(0) = \psi(0) = (0,1)^t$, we get the following identities.

\begin{lemma}\label{lemma_identities}
For $\varphi = F_{\lambda}^{-1}(0,1)^t$ and $\psi = F_{\lambda}^t (0,1)^t$ we have the following identities at all divisor points $(\lambda_i,\mu_i)$:
\begin{enumerate}
\item $\varphi(\mathbf{p}) = \mu_i^{-1}\varphi(0)$ and $\omega(\mathbf{p}) = \omega(0) = -1$.
\item $(1,0) F_{\lambda} = - \varphi^t J$.
\end{enumerate}
\end{lemma}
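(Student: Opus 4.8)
The plan is to reduce both identities to direct linear-algebra computations with the monodromy, resting on two observations: that $F_{\lambda} \in SL(2,\CC)$, and that $(0,1)^t$ is an eigenvector of $M_{\lambda_i}$ at every divisor point. First I would record that $\trace(U_{\lambda}) = \tfrac{1}{2}(-iu_y + iu_y) = 0$, so that $\det F_{\lambda}$ is constant and equal to $\det F_{\lambda}(0) = 1$; hence $F_{\lambda}(x) \in SL(2,\CC)$ for all $x$. From $\frac{d}{dx}F_{\lambda} = F_{\lambda}U_{\lambda}$ one checks that $\frac{d}{dx}F_{\lambda}^{-1} = -U_{\lambda}F_{\lambda}^{-1}$ and $\frac{d}{dx}F_{\lambda}^{t} = U_{\lambda}^t F_{\lambda}^{t}$, so that $F_{\lambda}^{-1}(0,1)^t$ solves the $\varphi$-system \eqref{eq_ode_phi} and $F_{\lambda}^{t}(0,1)^t$ solves the $\psi$-system \eqref{eq_ode_psi}, each with initial value $(0,1)^t$; this is exactly the asserted representation. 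Finally I use $M_{\lambda} = F_{\lambda}(\mathbf{p})$.

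For the first identity I would exploit the normal form $M_{\lambda_i} = \left(\begin{smallmatrix} 1/\mu_i & 0 \\ c(\lambda_i) & \mu_i \end{smallmatrix}\right)$ valid at every divisor point, where $b(\lambda_i) = 0$, $d(\lambda_i) = \mu_i$ and $\det M_{\lambda_i} = 1$ force $a(\lambda_i) = 1/\mu_i$. Reading off the second column shows $M_{\lambda_i}(0,1)^t = \mu_i (0,1)^t$, so $(0,1)^t$ is an eigenvector with eigenvalue $\mu_i$, whence $\varphi(\mathbf{p}) = F_{\lambda}^{-1}(\mathbf{p})(0,1)^t = M_{\lambda_i}^{-1}(0,1)^t = \mu_i^{-1}(0,1)^t = \mu_i^{-1}\varphi(0)$. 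For $\omega$ the initial value is immediate from $\varphi(0) = \psi(0) = (0,1)^t$, namely $\omega(0) = 0\cdot 0 - 1\cdot 1 = -1$. At $x = \mathbf{p}$ I compute $\psi(\mathbf{p}) = M_{\lambda_i}^{t}(0,1)^t = (c(\lambda_i),\mu_i)^t$ together with $\varphi(\mathbf{p}) = (0,\mu_i^{-1})^t$, so that $\omega(\mathbf{p}) = c(\lambda_i)\cdot 0 - \mu_i\cdot \mu_i^{-1} = -1 = \omega(0)$. I would remark that this is uniform across the three cases of Subsection \ref{subsection_divisor_dp}, since the lower-triangular form (with $c(\lambda_i)$ possibly vanishing) and the eigenvector $(0,1)^t$ are common to all of them.

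For the second identity the key tool is the $SL(2,\CC)$ involution identity $J A^{t} J^{-1} = A^{-1}$, equivalently $(F_{\lambda}^{t})^{-1} = J^{-1} F_{\lambda} J$, which holds for any $A$ with $\det A = 1$. Writing $\varphi^{t} = (0,1)(F_{\lambda}^{t})^{-1}$ and substituting gives $\varphi^{t} J = (0,1)J^{-1}F_{\lambda} J^2$; since $J^2 = -\unity$ and $(0,1)J^{-1} = (1,0)$, this collapses to $\varphi^{t} J = -(1,0)F_{\lambda}$, i.e. $(1,0)F_{\lambda} = -\varphi^{t} J$, as claimed.

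The computations are elementary, so I expect no genuine analytic obstacle; the only points requiring care are checking that the normal form $M_{\lambda_i} = \left(\begin{smallmatrix} 1/\mu_i & 0 \\ c(\lambda_i) & \mu_i \end{smallmatrix}\right)$ and the eigenvector $(0,1)^t$ persist in all three divisor-point cases (in particular the singular cases $\mu_i = \pm 1$, where $\mu_i^{-1} = \pm 1$ still yields $\omega(\mathbf{p}) = -1$), and tracking the signs from $J^2 = -\unity$ correctly when applying the $J$-involution.
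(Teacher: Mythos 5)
Your proposal is correct and follows essentially the same route as the paper: part (1) via the lower-triangular normal form $M_{\lambda_i} = \left(\begin{smallmatrix} 1/\mu_i & 0 \\ c(\lambda_i) & \mu_i \end{smallmatrix}\right)$ at divisor points (so that $(0,1)^t$ is an eigenvector, giving $\varphi(\mathbf{p})$, $\psi(\mathbf{p})$ and hence $\omega(\mathbf{p})=-1$ uniformly in all three cases), and part (2) via the $SL(2,\CC)$ identity $F_{\lambda}^{-1} = -J F_{\lambda}^t J$, which is exactly your $J A^t J^{-1} = A^{-1}$. Your write-up merely adds detail the paper leaves implicit (tracelessness of $U_{\lambda}$ giving $\det F_{\lambda}=1$, and the check that $F_{\lambda}^{-1}(0,1)^t$ and $F_{\lambda}^t(0,1)^t$ solve the systems \eqref{eq_ode_phi} and \eqref{eq_ode_psi}).
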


\begin{proof} The following observations are based on subsection \ref{subsection_divisor_dp}.
\begin{enumerate}
\item In all 3 cases (i)(a), (i)(b) and (ii), a direct calculation leads to
$$
\varphi(\mathbf{p}) = F_{\lambda_i}^{-1}(\mathbf{p})(0,1)^t = M_{\lambda_i}^{-1} (0,1)^t = (0,\mu_i^{-1})^t = \mu_i^{-1}\varphi(0).
$$
and
$$
\psi(\mathbf{p}) = F_{\lambda_i}^t(\mathbf{p})(0,1)^t = M_{\lambda_i}^t (0,1)^t = (c(\lambda_i), \mu_i)^t.
$$
Moreover, we get
$$
\omega(\mathbf{p}) = (\psi_1\varphi_1 - \psi_2\varphi_2)(\mathbf{p}) = -1 = \omega(0).
$$
\item Due to $\det(F_{\lambda}) = 1$ we get $F_{\lambda}^{-1} = -J F_{\lambda}^t J$ and consequently $(1,0) F_{\lambda} = - \varphi^t J$ holds. 
\end{enumerate}
\end{proof}

\subsection{A symplectic basis for $T_{(u,u_y)}M^{\mathbf{p}}$}
                                           
Let us adapt Theorem 8 from Chapter 2 in \cite{Poeschel_Trubowitz}.

\begin{theorem}\label{theorem_PT}
Denote by $\{(\lambda_i,\mu_i)\}$ the set of all divisor points on $Y$ and let Assumption \ref{assumption_divisor} hold. Moreover, let $\varphi$ be a solution of \eqref{eq_ode_phi} and $\psi$ a solution of \eqref{eq_ode_psi} and set $\varphi(0) = \psi(0):= (0,1)^t$. Finally, let $a_i := (\varphi_1\varphi_2(\lambda_i), \del_y (\varphi_1\varphi_2)(\lambda_i))$, $b_i := (\omega(\lambda_i),\del_y\omega(\lambda_i))$ and
$$
\kappa_i := \int_0^{\mathbf{p}} \left( \lambda_i \varphi_1^2(\lambda_i) e^u + \lambda_i^{-1} \varphi_2^2(\lambda_i) e^u \right) \, dx.
$$
Then there holds for all $i,j$:
\begin{enumerate}
\item $\Omega(a_i, a_j) = 0$,
\item $\Omega(a_i, b_j) = \delta_{ij} \cdot \kappa_i$,
\item $\Omega(b_i, b_j) = 0$.
\end{enumerate}
\end{theorem}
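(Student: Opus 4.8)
The plan is to unwind each symplectic pairing into an explicit integral over $[0,\mathbf{p}]$ and then recognize the integrand either as a total $x$-derivative (the off-diagonal terms) or as the $\kappa_i$-density plus a total $x$-derivative (the diagonal term in (2)). Reading $a_i$ and $b_i$ as the tangent vectors $(\delta u,\delta u_y)=((\varphi_1\varphi_2)(\lambda_i),\del_y(\varphi_1\varphi_2)(\lambda_i))$ and $(\omega(\lambda_i),\del_y\omega(\lambda_i))$, the definition of $\Omega$ gives, for instance,
$$\Omega(a_i,b_j) = \int_0^{\mathbf{p}} \left[(\varphi_1\varphi_2)(\lambda_i)\,\del_y\omega(\lambda_j) - \omega(\lambda_j)\,\del_y(\varphi_1\varphi_2)(\lambda_i)\right]dx,$$
and similarly for $\Omega(a_i,a_j)$ and $\Omega(b_i,b_j)$. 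The three displayed identities in Lemma \ref{antiderivatives} are designed so that, after the evident relabeling, each of these integrands (for $\lambda_i \neq \lambda_j$) is the $x$-derivative of an explicit bracket; hence each off-diagonal pairing collapses to a boundary term $[\,\cdot\,]_0^{\mathbf{p}}$.

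First I would dispose of all pairings with $i \neq j$. The only input needed is Lemma \ref{lemma_identities}: at a divisor point one has $\varphi(\mathbf{p}) = \mu_i^{-1}\varphi(0)$ with $\varphi(0) = (0,1)^t$, so $\varphi_1$ vanishes at both $x=0$ and $x=\mathbf{p}$, while $\omega = -1$ at both endpoints and $\psi(\mathbf{p}) = (c(\lambda_i),\mu_i)^t$. Plugging these into the brackets of Lemma \ref{antiderivatives}: for (1) every monomial carries a factor $\varphi_1(\lambda_i)$ or $\varphi_1(\lambda_j)$ and thus vanishes at both ends, giving $\Omega(a_i,a_j)=0$; for (2) the same cancellation occurs, giving $\Omega(a_i,b_j)=0$; for (3) all terms except $\omega(\lambda_i)\omega(\lambda_j)$ vanish, and that surviving term takes the identical value $\tfrac{i(\lambda_i+\lambda_j)}{\lambda_i-\lambda_j}$ at both endpoints (since $\omega=-1$ there), so the two boundary contributions cancel and $\Omega(b_i,b_j)=0$. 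The cases $i=j$ in (1) and (3) are immediate from the antisymmetry of $\Omega$.

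The substance of the theorem is the diagonal case of (2), and this is where I expect the real work to lie, since the bracket in the second identity of Lemma \ref{antiderivatives} becomes singular as $\lambda_j \to \lambda_i$ and cannot be used directly. Instead I would compute the diagonal density $D_i := (\varphi_1\varphi_2)(\lambda_i)\del_y\omega(\lambda_i) - \omega(\lambda_i)\del_y(\varphi_1\varphi_2)(\lambda_i)$ from identities (2) and (4) of Lemma \ref{basic_eqs}. A short rearrangement factors it as
$$D_i = \tfrac{1}{2}\,(\varphi_1\psi_1+\varphi_2\psi_2)\left[(\lambda_i e^u - e^{-u})\varphi_1^2(\lambda_i) + (\lambda_i^{-1}e^u - e^{-u})\varphi_2^2(\lambda_i)\right].$$
The key observation is that $\varphi_1\psi_1+\varphi_2\psi_2$ is conserved: a direct computation with the $x$-equations gives $\tfrac{d}{dx}(\varphi_1\psi_1+\varphi_2\psi_2)=0$, and $\varphi(0)=\psi(0)=(0,1)^t$ fixes it to be identically $1$, collapsing $D_i$ to a $\psi$-free density.

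It then remains to integrate this density. Using identity (1) of Lemma \ref{basic_eqs} in the form $(\lambda_i e^u + e^{-u})\varphi_1^2 + (\lambda_i^{-1}e^u + e^{-u})\varphi_2^2 = 2i\,\tfrac{d}{dx}(\varphi_1\varphi_2)$, I would split
$$D_i = \left(\lambda_i e^u \varphi_1^2(\lambda_i) + \lambda_i^{-1}e^u \varphi_2^2(\lambda_i)\right) - i\,\tfrac{d}{dx}(\varphi_1\varphi_2)(\lambda_i),$$
whence $\Omega(a_i,b_i) = \kappa_i - i[\varphi_1\varphi_2]_0^{\mathbf{p}}$; since $\varphi_1$ vanishes at both endpoints the boundary term drops and $\Omega(a_i,b_i)=\kappa_i$. (The ``Moreover'' identity of Lemma \ref{antiderivatives} packages essentially the same reduction as a total derivative and may be used in place of the conserved-quantity argument.) The main obstacle is precisely this diagonal term: one must resist treating it as a pure boundary term — which would wrongly produce $0$ — and instead isolate the genuine $\kappa_i$-density, the two ingredients being the conserved Wronskian $\varphi_1\psi_1+\varphi_2\psi_2\equiv 1$ and the total-derivative identity for the symmetric $e^{\pm u}$ combination.
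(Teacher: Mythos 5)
Your proposal is correct and takes essentially the same route as the paper: the off-diagonal pairings collapse to boundary terms via the antiderivative identities of Lemma \ref{antiderivatives} together with the endpoint values $\varphi_1(0)=\varphi_1(\mathbf{p})=0$ and $\omega(0)=\omega(\mathbf{p})=-1$ from Lemma \ref{lemma_identities}, and the diagonal pairing is handled with the conserved quantity $\varphi_1\psi_1+\varphi_2\psi_2\equiv 1$ (which you prove, while the paper merely asserts it). Your split of the diagonal density using identity (1) of Lemma \ref{basic_eqs} is just a rearrangement of the paper's use of the ``Moreover'' identity of Lemma \ref{antiderivatives}: since $\varphi_1\psi_1+\varphi_2\psi_2\equiv 1$ one has $\varphi_1^2\varphi_2\psi_1+\varphi_1\varphi_2^2\psi_2=\varphi_1\varphi_2$, so the two boundary terms are literally identical.
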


\begin{remark}
The proof of Theorem \ref{Theorem_Darboux} will show that $\partial_\lambda b(\lambda_i)=-i(\mu_i/2\lambda_i)\kappa_i.$ Due to Assumption \ref{assumption_divisor}, the derivative $\partial_\lambda b(\lambda_i) \neq 0$ and therefore $\kappa_i \neq 0$.
\end{remark}

\begin{proof}[Proof of Theorem \ref{theorem_PT}]
The following calculations are based on Lemma \ref{antiderivatives} and Lemma \ref{lemma_identities}.
\begin{enumerate}

\item The case $i=j$ is trivial. For $i \neq j$ we get
\begin{eqnarray*}
\Omega(a_i, a_j) & = & \int_0^{\mathbf{p}} \left( (\varphi_1\varphi_2)(\lambda_i)\del_y (\varphi_1\varphi_2)(\lambda_j) - (\varphi_1\varphi_2)(\lambda_j)\del_y (\varphi_1\varphi_2)(\lambda_i)  \right) \, dx \\
& \stackrel{\text{Lemma }\ref{antiderivatives}}{=} & \big[\tfrac{-2i}{\lambda_i - \lambda_j}(\lambda_i \varphi_1^2(\lambda_i)(\varphi_2^2)(\lambda_j) - (\lambda_i + \lambda_j)(\varphi_1\varphi_2)(\lambda_i)(\varphi_1\varphi_2)(\lambda_j) \\
& & + \lambda_j \varphi_2^2(\lambda_i)(\varphi_1^2)(\lambda_j))\big]_0^{\mathbf{p}}.
\end{eqnarray*}
Lemma \ref{lemma_identities} gives $\varphi_1(\lambda_k)(0) = \varphi_1(\lambda_k)(\mathbf{p}) = 0$ for $k=i,j$. Thus, we have $\Omega(a_i, a_j) = 0$.

\item For $i \neq j$ we get
\begin{eqnarray*}
\Omega(a_i, b_j) & = & \int_0^{\mathbf{p}} (\varphi_1\varphi_2)(\lambda_i)\del_y \omega(\lambda_j) - \omega(\lambda_i)\del_y (\varphi_1\varphi_2)(\lambda_j)\, dx \\
& \stackrel{\text{Lemma }\ref{antiderivatives}}{=} & \big[\tfrac{-i}{\lambda_i - \lambda_j}(2\lambda_i \varphi_1^2(\lambda_i)(\psi_1\varphi_2)(\lambda_j) - (\lambda_i + \lambda_j)(\varphi_1\varphi_2)(\lambda_i)\omega(\lambda_j) \\
& & - 2\lambda_j \varphi_2^2(\lambda_i)(\psi_2\varphi_1)(\lambda_j) )\big]_0^{\mathbf{p}}.
\end{eqnarray*}
Lemma \ref{lemma_identities} gives $\varphi_1(\lambda_k)(0) = \varphi_1(\lambda_k)(\mathbf{p}) = 0$ for $k=i,j$. Thus, we have $\Omega(a_i, b_j) = 0$. In the case $i = j$ we claim $\Omega(a_i, b_i) = \kappa_i$. Since $\psi_1\varphi_1 + \psi_2\varphi_2 \equiv 1$, we can calculate
\begin{eqnarray*}
\Omega(a_i, b_i) - \kappa_i & = & \int_0^{\mathbf{p}} (\varphi_1\varphi_2)(\lambda_i)\del_y \omega(\lambda_i) - \omega(\lambda_i)\del_y (\varphi_1\varphi_2)(\lambda_i)\, dx \\
& & -\int_0^{\mathbf{p}} \left( \lambda_i \varphi_1^2(\lambda_i) e^u + \lambda_i^{-1} \varphi_2^2(\lambda_i) e^u \right) \, dx \\
& = & \int_0^{\mathbf{p}} (\varphi_1\varphi_2)(\lambda_i)\del_y \omega(\lambda_i) - \omega(\lambda_i)\del_y (\varphi_1\varphi_2)(\lambda_i)\, dx \\
& & -\int_0^{\mathbf{p}} (\psi_1\varphi_1 + \psi_2\varphi_2)\left( \lambda_i \varphi_1^2(\lambda_i) e^u + \lambda_i^{-1} \varphi_2^2(\lambda_i) e^u \right) \, dx \\
& = & -\frac{1}{2}\int_0^{\mathbf{p}} (\varphi_1^2\varphi_2\psi_2 + \varphi_1^3\psi_1)(\lambda_i)(\lambda_i e^u - e^{-u}) \, dx \\
& & -\frac{1}{2}\int_0^{\mathbf{p}} (\varphi_1\varphi_2^2\psi_1 + \varphi_2^3\psi_2)(\lambda_i)(\lambda_i^{-1}e^u - e^{-u}) \, dx \\
& \stackrel{\text{Lemma }\ref{antiderivatives}}{=} & \left[-i(\varphi_1^2\varphi_2\psi_1)(\lambda_i) -i(\varphi_1\varphi_2^2\psi_2)(\lambda_i)\right]_0^{\mathbf{p}}.
\end{eqnarray*}
Again, Lemma \ref{lemma_identities} gives $\varphi_1(\lambda_i)(0) = \varphi_1(\lambda_i)(\mathbf{p}) = 0$ and consequently $\Omega(a_i, b_i) - \kappa_i = 0$.

\item The case $i=j$ is trivial. For $i \neq j$ we get
\begin{eqnarray*}
\Omega(b_i, b_j) & = & \int_0^{\mathbf{p}} \left( \omega(\lambda_i)\del_y \omega(\lambda_j) - \omega(\lambda_j)\del_y \omega(\lambda_i)  \right) \, dx \\
& \stackrel{\text{Lemma }\ref{antiderivatives}}{=} & \big[\tfrac{i}{\lambda_i - \lambda_j}(4\lambda_j (\psi_1\varphi_2)(\lambda_i)(\psi_2\varphi_1)(\lambda_j) + (\lambda_i + \lambda_j)\omega(\lambda_i)\omega(\lambda_j) \\
& & + 4\lambda_i(\psi_2\varphi_1)(\lambda_i)(\psi_1\varphi_2)(\lambda_j))\big]_0^{\mathbf{p}}.
\end{eqnarray*}
Lemma \ref{lemma_identities} gives $\varphi_1(\lambda_k)(0) = \varphi_1(\lambda_k)(\mathbf{p}) = 0$ and $\omega(\lambda_k)(0) = \omega(\lambda_k)(\mathbf{p}) = -1$ for $k=i,j$. Thus, we have $\Omega(b_i, b_j) = 0$. This concludes the proof.
\end{enumerate}
\end{proof}

\subsection{$(\lambda_i,\mu_i)$ considered as Darboux coordinates}
In the following a variation $\frac{d}{dt} f(\delta u, \delta u_y)|_{t=0}$ of a function $f: M^{\mathbf{p}} \to \CC$ in the direction $(\delta u, \delta u_y)$ is abbreviated by $\delta f$. Likewise, a variation of $f$ in the direction $(\widetilde{\delta} u, \widetilde{\delta} u_y)$ is denoted by $\widetilde{\delta}f$.

\begin{theorem}\label{Theorem_Darboux}
Let Assumption \ref{assumption_divisor} hold for a divisor $D(u,u_y)$ on $Y$. For all $(\delta u, \delta u_y), (\widetilde{\delta} u, \widetilde{\delta} u_y) \in T_{(u,u_y)}M^{\mathbf{p}}$ that are finite linear combinations of the $a_i, b_i$ from Theorem \ref{theorem_PT} there holds
\beq
\Omega((\delta u, \delta u_y),(\widetilde{\delta} u, \widetilde{\delta}u_y)) = \frac{i}{2}\sum_i \left(\frac{\delta \lambda_i}{\lambda_i}\frac{\widetilde{\delta}\mu_i}{\mu_i} - \frac{\widetilde{\delta}\lambda_i}{\lambda_i}\frac{\delta\mu_i}{\mu_i}\right)
\label{eq_Darboux}
\ee
\end{theorem}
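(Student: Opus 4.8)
The plan is to verify \eqref{eq_Darboux} on the symplectic basis $\{a_i,b_i\}$ produced by Theorem \ref{theorem_PT}. Both sides of \eqref{eq_Darboux} are antisymmetric bilinear forms in $(\delta,\widetilde\delta)$ on the (finite-dimensional) span of the $a_i,b_i$, and Theorem \ref{theorem_PT} already evaluates the left-hand side $\Omega$ on every pair of basis vectors. Since $\kappa_i\neq 0$, the form $\Omega$ is nondegenerate on this span, so every linear functional on it is of the form $\delta\mapsto\Omega(X,\delta)$; in particular each variation $\delta\ln\lambda_i$ and $\delta\ln\mu_i$ can be written through the functionals $\delta\mapsto\Omega(a_i,\delta)$ and $\delta\mapsto\Omega(b_i,\delta)$. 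Once these expressions are found, the right-hand side of \eqref{eq_Darboux} becomes an explicit bilinear expression in the numbers $\Omega(a_i,\delta),\Omega(b_i,\delta),\Omega(a_i,\widetilde\delta),\Omega(b_i,\widetilde\delta)$, and the orthogonality relations of Theorem \ref{theorem_PT} collapse it to $\Omega(\delta,\widetilde\delta)$.

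First I would compute $\delta\ln\lambda_i$. The point $\lambda_i$ is defined implicitly by $b(\lambda_i)=0$, so differentiating this relation along the variation gives $\delta\lambda_i=-\,\delta b(\lambda_i)/\partial_\lambda b(\lambda_i)$, where $\delta b(\lambda_i)$ denotes the variation at fixed spectral parameter. By Lemma \ref{lemma_identities}(2) one has $(F_\lambda)_{11}=\varphi_2$, $(F_\lambda)_{12}=-\varphi_1$, and together with $(F_\lambda)_{21}=\psi_1$, $(F_\lambda)_{22}=\psi_2$ this lets me rewrite the matrix entries in the coordinates $\varphi,\psi$. Using Lemma \ref{lemma_variation_ln_mu} and the lower-triangular form of $M_{\lambda_i}$ at a divisor point, $\delta b(\lambda_i)=\big(\int_0^{\mathbf p}F_\lambda\,\delta U_\lambda\,F_\lambda^{-1}\,dx\;M_\lambda\big)_{12}=\mu_i\int_0^{\mathbf p}(F_\lambda\,\delta U_\lambda\,F_\lambda^{-1})_{12}\,dx$; identity (2) of Lemma \ref{basic_eqs} turns the $(1,2)$-entry of the integrand into $-i\big(\varphi_1\varphi_2\,\delta u_y-\del_y(\varphi_1\varphi_2)\,\delta u\big)$, whence $\delta b(\lambda_i)=-i\mu_i\,\Omega(a_i,\delta)$. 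The $\lambda$-derivative $\partial_\lambda b(\lambda_i)$ is obtained from the same variational scheme with $\partial_\lambda U_\lambda$ in place of $\delta U_\lambda$: its $(1,2)$-entry is $-\tfrac{i}{2}e^u(\varphi_1^2+\lambda_i^{-2}\varphi_2^2)$, whose integral is precisely $-i(\mu_i/2\lambda_i)\kappa_i$, the identity announced in the Remark. Dividing, the factors $\mu_i$ cancel and I obtain $\delta\ln\lambda_i=c_1\,\Omega(a_i,\delta)$ with $c_1$ a fixed multiple of $\kappa_i^{-1}$.

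Next I would compute $\delta\ln\mu_i$, which I expect to be the main obstacle. Here I would use $\mu_i=d(\lambda_i)$ and differentiate totally, $\delta\mu_i=\delta d(\lambda_i)+\partial_\lambda d(\lambda_i)\,\delta\lambda_i$. The key simplification is that in the $d$-representation the off-diagonal entry $c(\lambda_i)$ never appears: since $M_{12}=b(\lambda_i)=0$, the fixed-parameter term reduces to $\delta d(\lambda_i)=\mu_i\int_0^{\mathbf p}(F_\lambda\,\delta U_\lambda\,F_\lambda^{-1})_{22}\,dx$, and identity (4) of Lemma \ref{basic_eqs}, together with $\omega=\psi_1\varphi_1-\psi_2\varphi_2$, shows the $(2,2)$-entry of the integrand equals $-\tfrac{i}{2}\big(\omega\,\delta u_y-\del_y\omega\,\delta u\big)$, so $\delta d(\lambda_i)=c_2\,\mu_i\,\Omega(b_i,\delta)$ for a fixed constant $c_2$. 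The remaining contribution $\partial_\lambda d(\lambda_i)\,\delta\lambda_i$ is, by the previous paragraph, merely a scalar multiple of $\Omega(a_i,\delta)$; its exact coefficient will turn out to be irrelevant. Dividing by $\mu_i$ I conclude $\delta\ln\mu_i=c_2\,\Omega(b_i,\delta)+c_3\,\Omega(a_i,\delta)$. The delicate bookkeeping is the computation of $c_1,c_2$ and the recognition of $\kappa_i$ inside $\partial_\lambda b(\lambda_i)$; as an independent check I would redo $\delta\mu_i$ by first-order eigenvalue perturbation of $M_{\lambda_i}$ using the left and right eigenvectors of Section \ref{subsection_divisor_dp}.

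Finally I would assemble the pieces. Inserting $\delta\ln\lambda_i=c_1\,\Omega(a_i,\delta)$ and $\delta\ln\mu_i=c_2\,\Omega(b_i,\delta)+c_3\,\Omega(a_i,\delta)$ into the right-hand side of \eqref{eq_Darboux}, the antisymmetrization kills the $c_3$-contribution, because it couples $\Omega(a_i,\cdot)$ with $\Omega(a_i,\cdot)$ and is therefore symmetric in $(\delta,\widetilde\delta)$; this is exactly why the non-canonical $\Omega(a_i,\cdot)$-admixture in $\delta\ln\mu_i$ is harmless, so I never need the precise value of $c_3$. What survives is $\tfrac{i}{2}\sum_i c_1c_2\big(\Omega(a_i,\delta)\Omega(b_i,\widetilde\delta)-\Omega(a_i,\widetilde\delta)\Omega(b_i,\delta)\big)$, and substituting the evaluations $\Omega(a_i,a_j)=\Omega(b_i,b_j)=0$ and $\Omega(a_i,b_j)=\delta_{ij}\kappa_i$ from Theorem \ref{theorem_PT} reproduces $\Omega(\delta,\widetilde\delta)$, the overall constant being fixed by $c_1,c_2$ and hence by the identity $\partial_\lambda b(\lambda_i)=-i(\mu_i/2\lambda_i)\kappa_i$ (with Assumption \ref{assumption_divisor} ensuring $\kappa_i\neq 0$). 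Equivalently, the same three relations show $\{\ln\lambda_i,\ln\lambda_j\}=\{\ln\mu_i,\ln\mu_j\}=0$ and $\{\ln\lambda_i,\ln\mu_j\}$ a constant multiple of $\delta_{ij}$, which is the coordinate-free content of \eqref{eq_Darboux}.
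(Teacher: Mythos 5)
Your route is the same as the paper's: the Implicit Function Theorem applied to $b(\lambda_i)=0$ and $\mu_i=d(\lambda_i)$, the variational formula of Lemma \ref{lemma_variation_ln_mu} to express $\delta b(\lambda_i)$, $\delta d(\lambda_i)$ and $\del_\lambda b(\lambda_i)$ through $\Omega(a_i,\cdot)$, $\Omega(b_i,\cdot)$ and $\kappa_i$, the observation that antisymmetrization in $(\delta,\widetilde{\delta})$ kills the $\del_\lambda d$-admixture, and the final assembly via the relations of Theorem \ref{theorem_PT}. Up to the ordering of the steps this is exactly the paper's proof.

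The genuine gap is the constant bookkeeping that you explicitly defer, because for this theorem the constant \emph{is} the content: equation \eqref{eq_Darboux} asserts the exact prefactor $\tfrac{i}{2}$, and an argument that only identifies the right-hand side as ``some fixed multiple'' of $\Omega$ adds nothing to Theorem \ref{theorem_PT}. Worse, your own intermediate values contradict your final sentence. From $\delta b(\lambda_i)=-i\mu_i\Omega(a_i,\delta)$ and $\del_\lambda b(\lambda_i)=-i(\mu_i/2\lambda_i)\kappa_i$ one gets $c_1=-2/\kappa_i$, and from your $(2,2)$-entry $-\tfrac{i}{2}\left(\omega\,\delta u_y-\del_y\omega\,\delta u\right)$ one gets $c_2=-\tfrac{i}{2}$, hence $c_1c_2=i/\kappa_i$. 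Theorem \ref{theorem_PT} gives the expansion
\begin{equation*}
\Omega(\delta,\widetilde{\delta})=\sum_i\tfrac{1}{\kappa_i}\left(\Omega(a_i,\delta)\Omega(b_i,\widetilde{\delta})-\Omega(a_i,\widetilde{\delta})\Omega(b_i,\delta)\right),
\end{equation*}
so your surviving term $\tfrac{i}{2}\sum_i c_1c_2\left(\Omega(a_i,\delta)\Omega(b_i,\widetilde{\delta})-\Omega(a_i,\widetilde{\delta})\Omega(b_i,\delta)\right)$ evaluates to $-\tfrac{1}{2}\,\Omega(\delta,\widetilde{\delta})$, not to $\Omega(\delta,\widetilde{\delta})$: the step ``substituting the evaluations \dots{} reproduces $\Omega$'' is precisely where the argument breaks. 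Note that this is not only your problem: the paper's proof closes because it takes $\delta d(\lambda_i)=-i\mu_i\Omega(b_i,\delta)$, twice your value, while your factor $-\tfrac{i}{2}$ is what Lemma \ref{basic_eqs}(4) actually produces for $\psi^t\delta U_{\lambda_i}\varphi$; together with the sign in the expansion step this is a discrepancy that any complete proof of \eqref{eq_Darboux} must confront and resolve, and it cannot be sidestepped by leaving $c_2$ symbolic and declaring the overall constant ``fixed by $c_1,c_2$.''
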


\begin{remark}
In order to prove equation \eqref{eq_Darboux} in full generality, one would have to establish the following claims:
\begin{enumerate}
\item[(i)] Every element in $T_{(u,u_y)}M^{\mathbf{p}}$ is a linear combination of the $a_i,\, b_i$.
\item[(ii)] The sum on the right hand side of equation \eqref{eq_Darboux} converges.
\end{enumerate}
This can be achieved by applying the careful asymptotic analysis carried out in \cite{Klein}, section 14.
\end{remark}

\begin{proof}[Proof of Theorem \ref{Theorem_Darboux}]
Due to Lemma \ref{lemma_variation_ln_mu} we obtain 
$$
\delta M_{\lambda} = \left( \begin{smallmatrix} \delta a & \delta b \\ \delta c & \delta d \end{smallmatrix}\right) = \left(\int_0^{\mathbf{p}} F_{\lambda}(x) \delta U_{\lambda}(x) F_{\lambda}^{-1}(x) \,dx\right) M_{\lambda}
$$
and thus
$$
\delta b(\lambda_i) = \begin{pmatrix} 1 & 0 \end{pmatrix} \delta M_{\lambda_i} \begin{pmatrix} 0 \\ 1 \end{pmatrix}.
$$

A direct calculation gives
\begin{eqnarray*}
\delta b(\lambda_i) & = & \mu_i\int_0^{\mathbf{p}} (1,0)F_{\lambda_i}(x) \delta U_{\lambda_i}(x) \varphi(\lambda_i) \,dx \\
& \stackrel{\text{Lemma }\ref{lemma_identities}}{=} & -\mu_i\int_0^{\mathbf{p}} \varphi^t(\lambda_i) J \delta U_{\lambda_i}(x) \varphi(\lambda_i) \,dx \\
& = & -i\mu_i\int_0^{\mathbf{p}}\left(\varphi_1\varphi_2(\lambda_i) \delta u_y - \tfrac{1}{2} ((-\lambda_i e^u + e^{-u})\varphi_1^2(\lambda_i) + (\lambda_i^{-1}e^u - e^{-u})\varphi_2^2(\lambda_i)) \delta u \right) dx \\
& = & -i\mu_i\int_0^{\mathbf{p}}\left(\varphi_1\varphi_2(\lambda_i) \delta u_y - \del_y(\varphi_1\varphi_2)(\lambda_i) \delta u \right) dx \\
& = & -i\mu_i\, \Omega(a_i,(\delta u,\delta u_y)).
\end{eqnarray*}

Considering the entry
$$
\delta d(\lambda_i) = \begin{pmatrix} 0 & 1 \end{pmatrix} \delta M_{\lambda_i} \begin{pmatrix} 0 \\ 1 \end{pmatrix}
$$
we get
\begin{eqnarray*}
\delta d(\lambda_i) & = & \mu_i \int_0^{\mathbf{p}} \psi^t (\lambda_i) \delta U_{\lambda_i}(x) \varphi(\lambda_i) \,dx \\
& = & -i\mu_i\, \Omega((\omega(\lambda_i),\del_y\omega(\lambda_i)),(\delta u,\delta u_y)) = -i\mu_i\, \Omega(b_i,(\delta u,\delta u_y)).
\end{eqnarray*}

Finally, in analogy to the variation $\delta b(\lambda_i)$, the expression 
$$
\del_{\lambda}b(\lambda_i) = \begin{pmatrix} 1 & 0 \end{pmatrix} \del_{\lambda} M_{\lambda_i} \begin{pmatrix} 0 \\ 1 \end{pmatrix}
$$
is given by
\begin{eqnarray*}
\del_{\lambda}b(\lambda_i) & = & \mu_i\int_0^{\mathbf{p}} (0,1) F_{\lambda_i}(x) \del_{\lambda} U_{\lambda_i}(x) \varphi(\lambda_i) \,dx \\
& \stackrel{\text{Lemma }\ref{lemma_identities}}{=} & -\mu_i\int_0^{\mathbf{p}} \varphi^t(\lambda_i) J \del_{\lambda} U_{\lambda_i}(x) \varphi(\lambda_i) \,dx \\
& = & -i\frac{\mu_i}{2\lambda_i} \int_0^{\mathbf{p}} \left( \lambda_i \varphi_1^2(\lambda_i) e^u + 1/\lambda_i \varphi_2^2(\lambda_i) e^u \right) \, dx = -i\frac{\mu_i}{2\lambda_i} \cdot \kappa_i.
\end{eqnarray*}

Due to Theorem \ref{theorem_PT} there holds
$$
(\delta u, \delta u_y) = \sum_i \frac{1}{\kappa_i} \bigg[\Omega(a_i,(\delta u, \delta u_y))\, b_i - \Omega(b_i,(\delta u, \delta u_y))\, a_i \bigg]
$$
and consequently
\begin{small}
$$
\Omega((\delta u, \delta u_y),(\widetilde{\delta} u, \widetilde{\delta}u_y)) = \sum_{i} \frac{1}{\kappa_i}\left[\Omega(a_i,(\widetilde{\delta} u,\widetilde{\delta} u_y)) \Omega(b_i,(\delta u,\delta u_y)) - \Omega(a_i,(\delta u,\delta u_y)) \Omega(b_i,(\widetilde{\delta} u,\widetilde{\delta} u_y))\right].
$$
\end{small}

On the other hand, the definition of $D$ implies $b(\lambda_i) = 0,\, d(\lambda_i) = \mu_i$, and we get
\begin{eqnarray*}
\delta \lambda_i & = & -\tfrac{\delta b(\lambda_i)}{\del_{\lambda} b(\lambda_i)}, \\
\delta \mu_i & = & \delta d(\lambda_i) - \frac{\del_{\lambda}d(\lambda_i) \delta b(\lambda_i)}{\del_{\lambda}b(\lambda_i)}
\end{eqnarray*}
by the Implicit Function Theorem. Using the last equations we can calculate
\begin{eqnarray*}
\frac{i}{2}\sum_{i} \left(\frac{\delta \lambda_i}{\lambda_i}\frac{\widetilde{\delta}\mu_i}{\mu_i} - \frac{\widetilde{\delta}\lambda_i}{\lambda_i}\frac{\delta\mu_i}{\mu_i}\right)
& = & \frac{i}{2}\sum_{i} \frac{1}{\lambda_i \mu_i}\left(\delta\lambda_i \widetilde{\delta}\mu_i - \widetilde{\delta}\lambda_i \delta\mu_i\right) \\
& = & \frac{i}{2}\sum_{i} \frac{1}{\lambda_i \mu_i \del_{\lambda}b(\lambda_i)}\left(\widetilde{\delta}b(\lambda_i) \delta d(\lambda_i) - \delta b(\lambda_i) \widetilde{\delta} d(\lambda_i)\right).
\end{eqnarray*}

Inserting the expressions for $\delta b(\lambda_i),\, \delta d(\lambda_i)$ and $\del_{\lambda}b(\lambda_i)$ yields
$$
\Omega((\delta u, \delta u_y),(\widetilde{\delta} u, \widetilde{\delta}u_y)) = \frac{i}{2}\sum_i \left(\frac{\delta \lambda_i}{\lambda_i}\frac{\widetilde{\delta}\mu_i}{\mu_i} - \frac{\widetilde{\delta}\lambda_i}{\lambda_i}\frac{\delta\mu_i}{\mu_i}\right)
$$
and the claim is proved.
\end{proof}


\bibliography{diss_bibliography} 
\bibliographystyle{abbrv}

\end{document}